\documentclass[12pt]{amsart}
\pdfoutput=1
\usepackage[parfill]{parskip}    
\usepackage{amsfonts}
\usepackage{amsmath}
\usepackage{amsthm}
\usepackage{epstopdf}
\usepackage{hyperref}
\usepackage[all]{xy}
\usepackage[pdftex]{color,graphicx}
\usepackage{psfrag}
\usepackage{times}

\setlength{\evensidemargin}{0.25in}
\setlength{\oddsidemargin}{0.25in}
\setlength{\textwidth}{6in}
\parskip0.2em

\title[Tight contact structures on laminar free hyperbolic three-manifolds]{
Tight contact structures on \\ laminar free hyperbolic three-manifolds}
\author{Tolga Etg\"u}
\address{Department of Mathematics, Ko\c{c} University, \.Istanbul 34450 TURKEY \newline and \newline Mathematical Sciences Research Institute, Berkeley, CA 94720 USA}
\email{tetgu@ku.edu.tr}
\date{\today}

\newtheorem{theorem}{Theorem}

\newtheorem{proposition}[theorem]{Proposition}

\newtheorem*{thm*}{Theorem} \theoremstyle{definition}
\newtheorem{remark}[theorem]{Remark}
\newcommand{\M}{M(m; p/q)}
\newcommand{\Mm}{M(m; 1/1)}
\newcommand{\Z}{\mathbb{Z}}
\newcommand{\Q}{\mathbb{Q}}

\begin{document}

\begin{abstract}

Whether every hyperbolic $3$--manifold admits a tight contact structure or not is an open question. Many hyperbolic $3$--manifolds contain taut foliations and taut foliations can be perturbed to tight contact structures. The first examples of hyperbolic $3$--manifolds without taut foliations were constructed by Roberts, Shareshian, and Stein \cite{RSS}, and infinitely many of them do not even admit essential laminations as shown by Fenley \cite{F}. In this paper, we construct tight contact structures on a family of three-manifolds including these examples. These contact structures are described by contact surgery diagrams and their tightness is proved using the contact invariant in Heegaard Floer homology. 

\end{abstract}

\maketitle

\section{Introduction}

A differential $1$--form $\alpha$ on an oriented $3$--manifold is called a (positive) {\sl contact form} and the $2$--plane field given as $\ker \alpha$ is called a (co-oriented, positive) {\sl contact structure} if $\alpha \wedge d \alpha > 0$. A contact structure $\xi$ on $M$ is called {\sl overtwisted} if there is a disk $D$ embedded in $M$ such that the tangent plane $T_xD$ to $D$ is the same as $\xi_x$ for every  $x \in \partial D$, otherwise it is called {\sl tight}. Every closed, oriented $3$--manifold carries an overtwisted  contact structure \cite{be, e, m}, but some of them admit no tight contact structure \cite{eh}. This is a manifestation of the fact that tight contact structures carry more information on the topology of the underlying manifold. The existence problem of tight contact structures on $3$--manifolds is still open. 

As a result of the verification of Thurston's Geometrization Conjecture by Perelman \cite{p1, p2, p3} (also see \cite{MT}) we now know that any closed $3$--manifold can be decomposed into geometric pieces along essential spheres and incompressible tori. Tight contact structures on a connected sum are obtained by connected sum of tight contact structures on the summands \cite{col}, and $3$--manifolds with incompressible tori admit infinitely many tight contact structures \cite{col2, hkm}. Hence to solve the aforementioned existence problem, it suffices to consider closed, oriented $3$--manifolds with geometric structures. This problem is completely resolved for Seifert fibered spaces. More precisely, a closed, oriented Seifert fibered space carries a tight contact structure if and only if it is not  obtained by $(2n-1)$-surgery along the $(2,2n+1)$ torus knot in $S^3$ for $n \in \Z_+$ \cite{ls}. On the other hand, a taut foliation on a $3$--manifold can be perturbed to a (fillable and universally) tight contact structure \cite{ET}, and many hyperbolic $3$--manifolds are known to carry taut foliations by a fundamental theorem of Gabai \cite{G}, in fact, it was once conjectured that every hyperbolic $3$--manifold does so. The first examples of closed, hyperbolic $3$--manifolds without any taut foliation were given in \cite{RSS}. As proved by Fenley \cite{F}, infinitely many of these examples and certain closely related $3$--manifolds do not even have essential laminations. Fenley's examples are apparently the only known examples of laminar free closed hyperbolic $3$--manifolds. Since the tight contact structures induced by taut foliations are universally tight and fillable, a more general structure, such as an essential lamination, might have been expected to be the source for virtually overtwisted and non-fillable tight contact structures on hyperbolic $3$--manifolds. It is a natural question to ask whether laminar free hyperbolic $3$--manifolds carry tight contact structures or not. 

In this paper, we construct tight contact structures on a general family of $3$--manifolds which contain those without taut foliations given in \cite{RSS} and Fenley's laminar free examples \cite{F}. In particular, we prove the following 
\begin{theorem}
There exist infinitely many closed hyperbolic $3$--manifolds which contain no essential lamination but admit tight contact structures.
\end{theorem}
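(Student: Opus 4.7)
The plan is to realize the relevant $3$--manifolds as Dehn surgeries on an explicit link in $S^3$, convert these topological surgeries into contact $(\pm 1)$--surgery diagrams along a Legendrian representative, and then certify tightness by showing that the Ozsv\'ath--Szab\'o contact invariant of the resulting structure is non-zero. The family in question is the family $\M$ introduced in the macros, which specializes (for suitable slopes and the parameter $m$) to the Roberts--Shareshian--Stein foliation-free manifolds of \cite{RSS} and to Fenley's laminar-free examples of \cite{F}. The first step is therefore to write down a precise surgery description of $\M$ as Dehn surgery on a two- or three-component link in $S^3$ whose components admit natural Legendrian representatives in the standard contact $S^3$ with computable Thurston--Bennequin and rotation numbers.

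Next, I would upgrade this topological surgery picture to a contact surgery diagram. For each component, one replaces the topological surgery coefficient $r$ by a sequence of contact $(\pm 1)$--surgeries on Legendrian pushoffs, using the standard translation of Ding--Geiges--Stipsicz; appropriate choices of stabilizations on the Legendrian unknots/cables in the diagram give rise to a candidate contact structure $\xi_{m;p/q}$ on $\M$. The flexibility in stabilizations will be used later to ensure the contact invariant behaves well.

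The core of the proof is showing that $\xi_{m;p/q}$ is tight. The cleanest way is to compute its contact invariant $c(\xi_{m;p/q}) \in \widehat{HF}(-\M)$ and show it is non-zero; by the fundamental property of $c$ proved by Ozsv\'ath and Szab\'o, non-vanishing implies tightness. I expect to do this by relating the diagram to a Stein fillable model via a sequence of contact $(-1)$--surgeries (for which $c$ is functorial and preserves non-vanishing) and a controlled contact $(+1)$--surgery. Equivalently, one can use Lisca--Stipsicz's surgery formula for $c$ in terms of the $d_3$ invariant and the knot Floer theory of the surgery link, verifying directly that the relevant element in $\widehat{HF}$ survives. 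The main obstacle is precisely this step: contact $(+1)$--surgery can kill the contact invariant, so one has to choose the Legendrian representatives and stabilizations carefully, and then perform an actual Heegaard Floer computation — probably by identifying a Stein cobordism from $(\M, \xi_{m;p/q})$ to a manifold whose non-vanishing contact invariant is already known.

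Finally, to obtain infinitely many hyperbolic, essential-lamination-free examples, I would restrict the parameter pair $(m;p/q)$ to the infinite sub-family singled out by Fenley in \cite{F}, which is shown there to be hyperbolic and laminar free. Since the tight contact structures $\xi_{m;p/q}$ have been produced for the whole family $\M$, they exist in particular on these Fenley manifolds, yielding the desired infinite collection of closed hyperbolic $3$--manifolds with no essential lamination but admitting a tight contact structure.
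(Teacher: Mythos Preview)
Your outline is the same strategy the paper uses: describe $\M$ by a contact surgery diagram on a Legendrian link in $(S^3,\xi_{\mathrm{std}})$, and certify tightness via non-vanishing of the Ozsv\'ath--Szab\'o contact invariant, then specialize to Fenley's parameters. Where you and the paper diverge is exactly at the point you flag as ``the main obstacle'': how to guarantee $c(\xi)\neq 0$ after the unavoidable positive contact surgery.

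You propose either a direct Heegaard Floer computation or locating a Stein cobordism to something with known non-zero invariant; neither is carried out, and as stated this is a genuine gap. The paper avoids any such computation by a single clean observation: after cancelling the $1$--handles, the diagram reduces to contact $r'$--surgery (with $r'=-1/(-m+1)<0$) on a Legendrian unknot together with contact $r$--surgery (with $r=(p-q)/q$) on a Legendrian knot $L$ whose smooth type is the knot $5_2$, with $tb(L)=1$ and $g_4(5_2)=1$. When $r<0$ everything is Legendrian surgery and the result is Stein fillable. When $r>0$, one invokes the Lisca--Stipsicz criterion from \cite{LS2}: if a Legendrian knot $L\subset (S^3,\xi_{\mathrm{std}})$ satisfies $tb(L)=2g_4(K)-1$, then \emph{every} positive contact $r$--surgery on $L$ has non-vanishing contact invariant. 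Since $tb(L)=1=2\cdot 1-1$, this applies, and the remaining negative surgery preserves non-vanishing by \cite{LS}. The residual case $p/q=1$ is handled separately (it is Seifert fibered, obtained by $(-m+2)$--surgery on the right-handed trefoil, and again \cite{LS2} applies). So the missing idea in your proposal is the identification of the surgery curve as a $5_2$ knot meeting the slice-Bennequin equality, which is what makes the positive-surgery step go through without any ad hoc Floer calculation.
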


\subsection*{Acknowledgments} We would like to thank Bar\i\c{s} Co\c{s}kun\"uzer for bringing to our attention Fenley's work \cite{F}, Rachel Roberts for helpful conversations. We also appreciate Bar\i\c{s} Co\c{s}kun\"uzer and Andr\'as Stipsicz's comments on a draft of this paper. This research was partially supported by T\"UB\.ITAK, the Scientific and Technological Research Council of Turkey.

\section{Hyperbolic $3$-manifolds with no essential lamination}

As it is discussed above, we will focus on hyperbolic $3$--manifolds since the existence problem of tight contact structures is completely resolved for Seifert fibered $3$--manifolds \cite{ls}.  Taut foliations and essential laminations are two important notions widely used to study hyperbolic $3$--manifolds. 
A codimension--$1$ foliation on a compact, connected manifold is called {\sl taut} if there is a circle transversely intersection every leaf. A compact $3$--manifold which admits a taut foliation has infinite fundamental group \cite{nov} and is irreducible unless it is covered by $S^1 \times S^2$ \cite{pal}. Conversely, many hyperbolic $3$--manifolds carry taut foliations \cite{G}. (See \cite{cale} for more on taut foliations and essential laminations in $3$--manifolds.)

Since taut foliations can be perturbed to tight contact structures, we are mainly interested in hyperbolic $3$--manifolds without taut foliations. 
The first such examples were given by Roberts, Shareshian and Stein \cite{RSS}. These examples are among the $3$--manifolds obtained by performing $p/q$-surgery along a section of the torus bundle over the circle with monodromy $\phi_m$ given by matrices of the form 
$$ A_m =
\left[
\begin{array}{cc}
  m &  1     \\
  -1 & 0      
\end{array}
\right] \in SL(2;\Z) \ ,
$$
where $m,p,q \in \Z$ with $(p,q)$=1 and $q\geq 0$.  We denote the result of this surgery by $\M$. For a fixed $m \leq -3$, all but finitely many of $\M$ are hyperbolic by the hyperbolic surgery theorem of Thurston \cite{th}. The main result of \cite{RSS} is 

\begin{theorem}[Roberts-Shareshian-Stein]\label{rss} If $m\leq -3$ is odd, $p$ is odd, and $p \geq q$, then $\M$ contains no taut foliation. 
\end{theorem}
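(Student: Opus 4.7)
The plan is to assume for contradiction that $\M$ admits a taut foliation $\mathcal{F}$ and derive arithmetic constraints on $(m,p,q)$ that violate the hypotheses. I would work in the torus bundle $T^3_{\phi_m}$ minus an open neighborhood of the section along which surgery is performed, so that $\M$ is recovered by Dehn filling along the surgery slope $p/q$. After an isotopy, $\mathcal{F}$ restricts to a taut foliation of the exterior that is either transverse or tangent to the boundary torus, and the restriction is carried by a finite collection of essential branched surfaces in the spirit of Gabai--Oertel. The first step is therefore to enumerate the essential branched surfaces in the punctured torus bundle that are compatible with the fibered structure coming from $\phi_m$.

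Second, for each such branched surface $B$, I would compute the set of boundary slopes on $\partial N(\text{section})$ realized by laminations fully carried by $B$. In a fibered 3-manifold, the interaction of $B$ with the monodromy $\phi_m$ gives a train track on the fiber whose carried lamination dictates both boundary behavior and taut-ness; the matrix $A_m$ with $m \leq -3$ then forces the carried laminations to have slopes lying in an explicit rational interval. I would then invoke the standard fact that a taut foliation of $\M$ cannot introduce a Reeb component after the $p/q$ filling, which translates into a constraint that the filling slope must be compatible with one of these computed slopes.

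The third and hardest step is a parity/arithmetic argument. The parities of $m$ and $p$ enter through the action of $A_m$ on $H_1$ of the fiber and through the gluing formulas determining how curves on the boundary torus close up after surgery. I expect the conclusion to be that, among the carried slopes, those realizable by a taut foliation reduce modulo $2$ in a specific way, incompatible with $p$ odd when $m$ is odd; the inequality $p\geq q$ eliminates the only remaining escape route (a large-slope family where the branched surface degenerates).

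The main obstacle is making the first step, classification of essential branched surfaces, sharp enough: torus bundles carry a rich zoo of branched surfaces and one must either reduce to a normal form adapted to $\phi_m$ or invoke a Palmeira/Thurston-type rigidity for $\mathbb{R}$-covered foliations on these bundles. Without such a normal form the arithmetic conclusion in step three cannot be pinned down, so the technical heart of the proof is a combinatorial finiteness result for branched surfaces in $T^3_{\phi_m}$ carrying taut laminations meeting the section transversely.
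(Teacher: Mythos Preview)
This theorem is not proved in the present paper. It is quoted as the main result of Roberts--Shareshian--Stein \cite{RSS} and used only as background motivation; the paper offers no argument for it, so there is no ``paper's own proof'' to compare your proposal against.

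That said, your outline diverges substantially from the actual argument in \cite{RSS}, and it has a genuine gap. The Roberts--Shareshian--Stein proof is \emph{group-theoretic}, not branched-surface-theoretic: they show that for the relevant $(m,p,q)$ the fundamental group $\pi_1(\M)$ admits no nontrivial action on a simply connected (possibly non-Hausdorff) $1$--manifold. A Reebless foliation would, after lifting to the universal cover and passing to the leaf space, produce exactly such an action, so its nonexistence follows. The parity hypotheses on $m$ and $p$ enter through an explicit analysis of the group presentation (in effect a mod~$2$ reduction of the relators), and the inequality $p\geq q$ is what places the surgery slope outside the interval where Roberts' earlier constructions \emph{do} yield taut foliations.

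Your proposed route via branched surfaces founders at the step you yourself flag as hardest. There is no known finite classification of essential branched surfaces in a once-punctured torus bundle that is sharp enough to conclude that \emph{every} taut foliation of the filled manifold is carried by one of finitely many explicit models; Gabai--Oertel gives existence of a carrying branched surface, not membership in a precomputed list. Without that finiteness, the slope computation in your second step and the parity argument in your third have nothing to bite on. The ``escape route'' you mention is also not the only one: foliations that are not transverse to the fibering, or that meet the boundary torus in a mix of circles and Reeb-annulus cores, are not covered by your setup. In short, the branched-surface strategy is well suited to \emph{constructing} taut foliations (and indeed Roberts uses it for that), but it is not the mechanism by which \cite{RSS} obstructs them.
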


A generalization of taut foliations of $3$--manifolds is the notion of an essential lamination introduced by Gabai and Oertel \cite{GO}. A {\sl lamination} is a foliation of a closed subset of the manifold. A lamination is said to be {\sl essential} if it contains no sphere leaf, no torus leaf bounding a solid torus, has an irreducible complement, and the leaves in the boundary of its complement are incompressible and end incompressible in the corresponding components of the complement. 
In the absence of a taut foliation in a $3$--manifold, it is still possible to find an essential lamination, in fact many hyperbolic $3$--manifolds without taut foliations mentioned in Theorem~\ref{rss} do contain essential laminations \cite{RSS}. On the other hand, Fenley proved the existence of infinitely many closed hyperbolic $3$--manifolds with no essential lamination.

\begin{theorem}[Fenley \cite{F}]\label{lam} If $m \leq -4$ and $|p-2q|=1$, then $\M$ contains no essential lamination. 
\end{theorem}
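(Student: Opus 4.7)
The plan is to follow Fenley's pseudo-Anosov flow technology. For $m \leq -4$ the matrix $A_m$ has $|\mathrm{tr}(A_m)|=|m|\geq 4$, so $A_m$ is hyperbolic and the monodromy $\phi_m$ is a linear Anosov diffeomorphism of $T^2$. Consequently the mapping torus $T_{\phi_m}$ carries a suspension Anosov flow $\Phi$, whose weak stable and weak unstable foliations are the suspensions of the stable and unstable foliations of $\phi_m$. The section of the torus bundle used to cut out $\M$ is isotopic to a closed orbit $\gamma$ of $\Phi$ (the suspension of the fixed point of $\phi_m$ at the origin). By the Fried--Goodman Dehn surgery construction for (pseudo-)Anosov flows, with respect to a framing of $\gamma$ coming from the weak foliations, for all but finitely many slopes the flow $\Phi$ survives the surgery as a pseudo-Anosov flow $\Phi'$ on $\M$, and I would begin by checking that slopes satisfying $|p-2q|=1$ are non-exceptional.

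Next I would invoke the Gabai--Mosher almost-transverse lamination theorem: if $\Lambda$ were an essential lamination in $\M$, then after a sequence of splittings and an isotopy one may assume $\Lambda$ is almost transverse to $\Phi'$. Passing to the universal cover $\widetilde{\M}$, the lifted flow $\widetilde{\Phi'}$ has weak stable and weak unstable singular foliations whose leaf spaces are non-Hausdorff simply connected $1$-manifolds. The orbit space $\mathcal{O}=\widetilde{\M}/\widetilde{\Phi'}$ is a topological plane carrying a pair of transverse singular foliations coming from these leaf spaces, and $\pi_1(\M)$ acts on $\mathcal{O}$ preserving this bifoliated structure. The lift $\widetilde{\Lambda}$ then projects to a $\pi_1$-equivariant family of properly embedded lines in $\mathcal{O}$ transverse to both singular foliations.

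The contradiction would come from the combinatorial structure forced by the hypotheses. The peripheral subgroup of the Dehn filling solid torus acts on $\mathcal{O}$ (and on its ideal circle $\partial_\infty \mathcal{O}$) through an element whose dynamics are computable explicitly from $(m,p,q)$. For $|p-2q|=1$ one would show this peripheral action is incompatible with the existence of any non-empty $\pi_1$-equivariant family of transverse lines: any such line would have to be permuted by the peripheral element in a way that violates either properness in $\mathcal{O}$ or the bounded-distance condition inherited from almost transversality. Concretely, $\Phi'$ is a skew $\mathbb{R}$-covered Anosov flow whose orbit space is a tilted lozenge-filled plane, and $|p-2q|=1$ is exactly the slope that produces the combinatorial obstruction forbidding an invariant essential lamination.

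The main obstacle is this last step: translating ``essential lamination in $\M$'' into a concrete algebraic condition on the action of $\pi_1(\M)$ on $\mathcal{O}$, and then proving that no such action exists for the distinguished slopes. It requires Fenley's machinery for skew $\mathbb{R}$-covered flows (perfect fits, lozenges, and the ideal circle compactification of $\mathcal{O}$) combined with careful bookkeeping of how the surgery parameters enter the formulas; the Anosov geometry of the underlying torus bundle provides the baseline, but the real work is isolating the single slope condition $|p-2q|=1$ from the three-parameter family of surgeries.
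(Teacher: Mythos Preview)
The paper does not prove this theorem at all; it is quoted verbatim as Fenley's result and simply cited from \cite{F}. There is therefore no argument in the paper to compare your proposal against.

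That said, your outline contains a structural problem worth naming. You propose to first produce a pseudo-Anosov flow $\Phi'$ on $\M$ via Fried--Goodman surgery and then use it as the scaffold against which a putative essential lamination is played off. But the (split) weak stable and unstable laminations of a pseudo-Anosov flow on a closed irreducible $3$--manifold are themselves essential laminations. Hence if such a $\Phi'$ existed on $\M$, the conclusion of the theorem would already be false. In other words, for the slopes with $|p-2q|=1$ the suspension flow must \emph{fail} to survive the surgery in the Fried--Goodman sense, so your first step cannot go through as written; checking that these slopes are ``non-exceptional'' would, if successful, disprove the theorem rather than prove it.

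Fenley's actual argument does not place a flow on the filled manifold. He works with the leaf-space/universal-circle picture coming from the suspension Anosov flow on the original torus bundle and shows directly that $\pi_1(\M)$ cannot act on an order tree (equivalently, on the kind of non-Hausdorff $1$--manifold an essential lamination would produce). The slope condition $|p-2q|=1$ enters through the peripheral relation adjoined by the Dehn filling, not through any flow on $\M$ itself. Your final paragraph gestures in this direction, but it is grafted onto an incompatible setup.
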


The manifold $\M$ can alternatively be described as the result of a Dehn surgery along the binding of an open book decomposition with punctured torus pages. Using this description, we obtain a
$4$-manifold (see Figure~\ref{openbook}) whose boundary is given by such an open book decomposition with monodromy $\phi_m$, and Figure~\ref{surgery} is the result of $p/q$-surgery along the binding of this open book. Here, we use the decomposition of $\phi_m$ into (right-handed) Dehn twists as $\phi_m=\tau_a^{-m+1}\tau_b\tau_a$, where  $\tau_a$ and $\tau_b$ are  Dehn twists along the standard generators $a$ and $b$ of the first homology of a punctured torus. This can be seen from the fact that, with respect to the basis $\{ a, b \}$, $\tau_a$ and $\tau_b$ can be given by the matrices
$$ \left[
\begin{array}{cc}
  1 &  1     \\
  0 & 1      
\end{array}
\right]  
\ \mbox{ and } \ 
 \left[
\begin{array}{cc}
  1 &  0     \\
  -1 & 1      
\end{array}
\right] \ , 
$$
respectively. Since we can freely change the monodromy by conjugation, we will sometimes work with $\tau_a^{-m+2}\tau_b$ instead of $\tau_a^{-m+1}\tau_b\tau_a$ for convenience. 

\begin{figure}[!ht]
\centering
\includegraphics[scale=0.5]{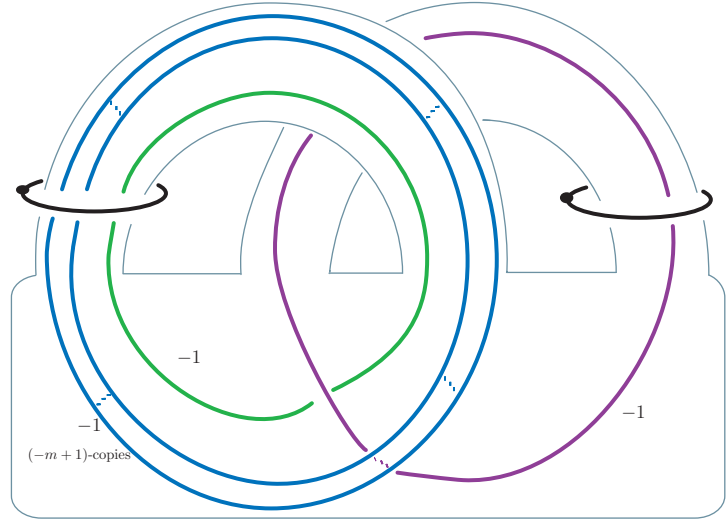} \caption{Handlebody decomposition of a $4$-manifold $X_m$ whose boundary has an open book decomposition with punctured torus page and monodromy $\phi_m=\tau_a^{-m+1}\tau_b\tau_a$ with $m < 1$.}
\label{openbook}
\end{figure}
\begin{figure}[!ht]
\centering
\includegraphics[scale=0.5]{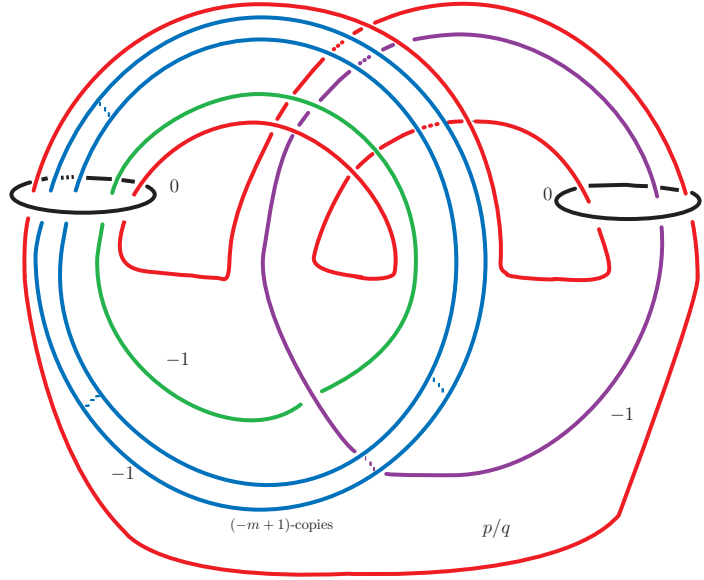} \caption{A surgery diagram for the $3$--manifold $\M$ for $m<1$.}
\label{surgery}
\end{figure}

\section{Contact Structures and Heegaard Floer Invariants}

In this section, we first describe contact structures on $\M$ by giving contact surgery diagrams. (For the basics of contact surgery diagrams see e.g. \cite{DG, DGS}.) Then we prove that these are tight whenever $p/q \neq 1$. If $p/q < 1$, then these diagrams involve only contact surgeries of negative framing, hence the contact structures they describe can alternatively be obtained by contact surgery diagrams which involve only $-1$ contact framing, and therefore these contact structures are in fact Stein fillable (in particular, tight). When $p/q>1$ tightness of the contact structures are proved by showing that they have nonzero Heegaard Floer invariants \cite{OS}.

\begin{proposition}\label{main}
For every $m \in \Z_{\leq 0}$ and $p/q \in \Q$, $\M$ admits a tight contact structure. 
\end{proposition}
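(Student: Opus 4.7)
The plan is to convert the topological description of $\M$ into a contact surgery diagram and argue tightness in two regimes, depending on whether $p/q < 1$ or $p/q > 1$. Since $m \leq 0$, the monodromy $\phi_m = \tau_a^{-m+1}\tau_b\tau_a$ is a product of right-handed Dehn twists on the once-punctured torus, so by Giroux's correspondence the open book on $\partial X_m$ supports a Stein fillable contact structure $\xi_0$, with respect to which the binding is transverse. Placing a neighbourhood of the binding in a standard contact form, I would replace the topological $p/q$-surgery on the binding by a contact $r$-surgery on a Legendrian push-off, where $r$ is determined by $p/q$ and the page framing. Expanding $r$ in a continued fraction produces a contact surgery diagram for $\M$ built entirely out of $\pm 1$ contact surgeries on Legendrian knots, which, after attaching the Legendrian diagram for $\xi_0$ itself, may be drawn inside $(S^3, \xi_{std})$.

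If $p/q < 1$ the continued fraction can be arranged so that every contact surgery coefficient equals $-1$. The resulting contact structure is then Stein fillable by the Eliashberg-Gompf theorem (the diagram is a Weinstein handle decomposition attached to a filling of $\xi_0$), and Stein fillable contact structures are tight. If $p/q > 1$, at least one surgery coefficient is forced to be $+1$, and Stein fillability is lost; in this regime I would prove tightness by showing that the Ozsv\'ath-Szab\'o contact invariant $c(\xi) \in \widehat{HF}(-\M)$ is nonzero. The idea is to isolate the $(+1)$-contact surgery so that $\xi$ is obtained from a Stein fillable contact manifold $(Y',\xi')$ by a single $(+1)$-contact surgery, and then invoke the Lisca-Stipsicz naturality formula relating $c(\xi)$ and $c(\xi')$ through the $2$-handle cobordism. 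Since $c(\xi')$ is nonzero (being Stein fillable), the problem reduces to a concrete triangle-map computation on a Heegaard diagram for $\M$ built from the open book, verifying that no cancellation occurs at the chain level.

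The main obstacle is this nonvanishing step in the $p/q > 1$ case: cobordism maps induced by $(+1)$-surgery need not preserve nonvanishing of contact classes, so one has to identify the particular Heegaard triangle whose count represents $c(\xi)$ and check, uniformly in $m \leq 0$ and in the infinitely many $p/q > 1$, that it is not annihilated. The borderline value $p/q = 1$, at which the continued fraction degenerates and which the case analysis of Section~3 explicitly sets aside, would need a separate treatment, presumably by recognising $\Mm$ as Legendrian surgery on a different Stein fillable contact manifold.
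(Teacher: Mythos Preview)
Your outline for $p/q<1$ is essentially the paper's: after cancelling the $1$--handles the diagram reduces to contact surgeries in $(S^3,\xi_{std})$ all with negative coefficient, hence Stein fillable.  The proposal for $p/q=1$ is likewise on the right track, though the paper's actual resolution of that case is not via Legendrian surgery on a separate Stein fillable manifold but by identifying $M(m;1/1)$ with $(-m+2)$--surgery on the right-handed trefoil and again invoking the Lisca--Stipsicz slice-genus criterion.

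The genuine gap is in the $p/q>1$ case.  You correctly recognise that a $(+1)$--contact surgery can kill the contact invariant, and you propose to handle this by a chain-level Heegaard-triangle computation ``uniformly in $m\le 0$ and in the infinitely many $p/q>1$''.  This is not carried out, and you yourself flag it as the main obstacle; as stated it is a program, not a proof.  The paper avoids this computation entirely.  After the handle cancellations, the surgery knot $L$ on which the $r=(p-q)/q>0$ contact surgery is performed sits in $(S^3,\xi_{std})$, has Thurston--Bennequin number $tb(L)=1$, and is smoothly the $5_2$ knot, whose $4$--ball genus is $1$.  Thus $tb(L)=2g_4-1$, and the theorem of Lisca and Stipsicz (Ozsv\'ath--Szab\'o invariants and tight contact three-manifolds, I) then guarantees that \emph{every} positive contact surgery on $L$ yields a contact structure with nonvanishing invariant --- no triangle counting required, and no dependence on $m$ or on the particular $p/q$.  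Your proposal is missing exactly this identification of the smooth knot type and the appeal to the slice-genus criterion; without it, the argument in the $r>0$ regime is incomplete.
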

\begin{proof}
The contact surgery diagram in Figure~\ref{contact} describes contact structures on $\M$ for $p/q \neq 1$. To see that the underlying manifold is $\M$, consider the Legendrian surgery diagram in Figure~\ref{torus} (right). Figure~\ref{contact} is obtained by cancelling the $1$-handles by using a $-1$-framed $2$-handle for each. Note that, unless $|p-q|=1$, such a diagram does not uniquely determine a contact structure, but nonetheless the contact structures it describes are related in a well-understood way and the discussion below applies to all.
\begin{figure}[!ht]
\centering
\includegraphics[scale=0.7]{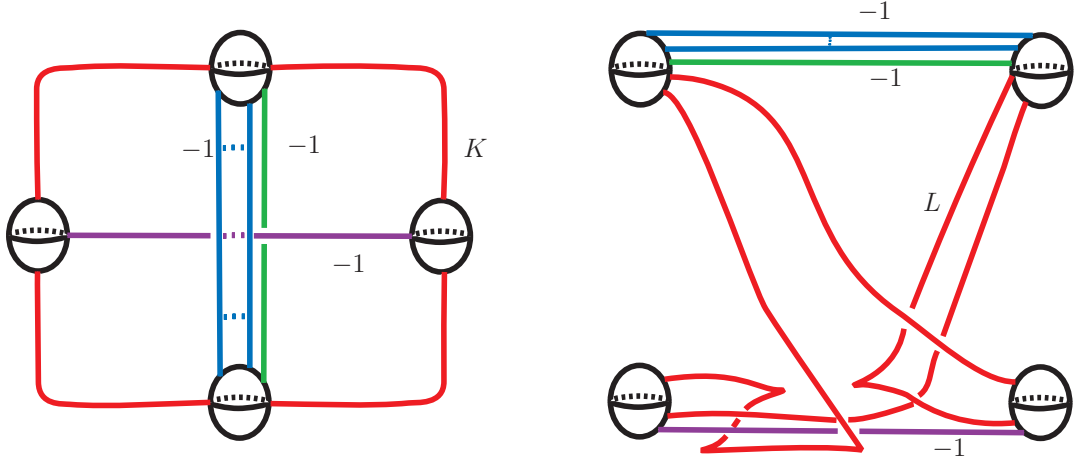} \caption{{\sl Left:} An alternative way to draw the smooth surgery diagram of $X_m$ in Figure~\ref{openbook}. $K$ indicates the curve along which $p/q$-surgery is performed to get $\M$ from $\partial X_m$.  {\sl Right:} A Legendrian surgery description of a Stein structure on $X_m$. $L$ indicates a Legendrian realization of $K$ with $tb(L)=1$. (Note that the fact that $\tau_a^{-m+2}\tau_b$ and $\tau_a^{-m+1}\tau_b\tau_a$ are conjugate manifests itself while passing from left to right.)}
\label{torus}
\end{figure}
\begin{figure}[!ht]
\centering
\includegraphics[scale=0.5]{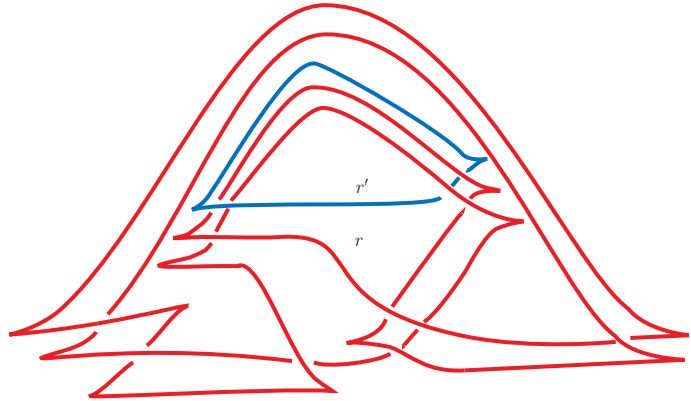} \caption{Surgery diagram for contact structures on $\M$, where $r=(p-q)/q$ and $r'=-1/(-m+1)$.}
\label{contact}
\end{figure}

As it is explained in \cite{DGS}, any contact surgery on a Legendrian knot with a negative contact framing can be realized as a sequence of contact $-1$ surgeries along Legendrian knots. Therefore, when $r=(p-q)/q< 0$, any contact structure $\xi$ given by Figure~\ref{contact} has another contact surgery description which involves only $-1$ contact surgeries. As a consequence, this contact structure is Stein fillable, hence tight.

Now assume that $r>0$. In order to prove the tightness of the contact structure $\xi$ given by the contact surgery diagram in Figure~\ref{contact}, we use the contact invariant in Heegaard Floer homology. Since the contact invariant of an overtwisted contact structure vanishes \cite{OS}, it suffices to show that the contact invariant of $\xi$ is nonzero. It is also known that the contact invariant of the result of a contact surgery with negative framing on a Legendrian knot in a contact manifold with nonvanishing contact invariant has nonzero contact invariant \cite{LS}. On the other hand, the Legendrian knot $L$ with contact framing  $r$ in Figure~\ref{contact} has Thurston-Bennequin invariant $tb(L)=1$, its smooth type is (the positive) $5_2$ knot $K$ which has $4$-ball genus $g_4(K)=1$. It is known that if a Legendrian knot in $S^3$ with the standard contact structure satisfies $tb(L)=2g_4(K) -1$, then any contact surgery along $L$ with positive contact framing results in a contact structure with nonzero contact invariant \cite{LS2}. 

Contact surgery with $r=0$ framing is not well-defined since all the contact structures on the filling solid torus which can be glued to the complement of a standard neighborhood of the surgery curve  are overtwisted. So $p/q=1$ case has to be treated separately. Apparently, the underlying manifold $\Mm$ is a Seifert fibered space. In fact, it can be obtained by $(-m+2)$-surgery on the right-handed trefoil as demonstrated in Figure~\ref{whitehead}.
\begin{figure}[!ht]
\centering
\includegraphics[scale=0.75]{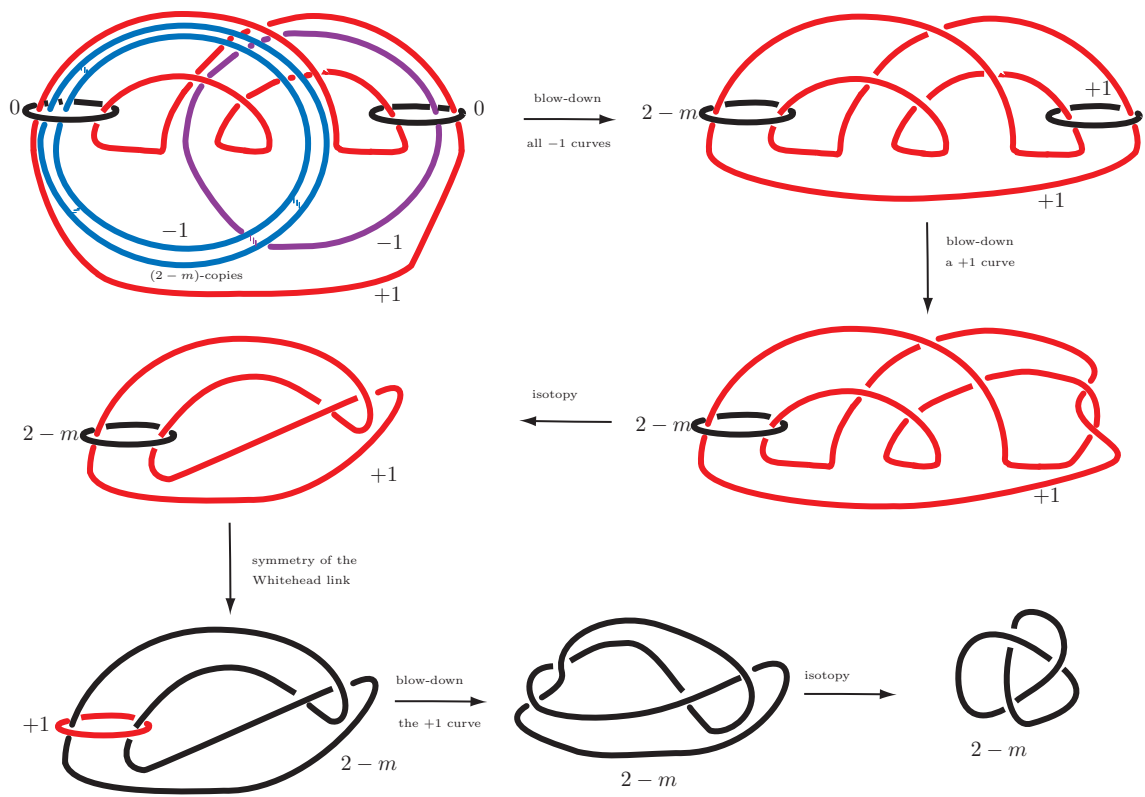} \caption{$\Mm$ can be obtained by a surgery on the right-handed trefoil.}
\label{whitehead}
\end{figure}
Since the maximal Thurston-Bennequin number and the $4$-ball genus of the right-handed trefoil are both $1$, the main result of \cite{LS2} implies that $\Mm$ admits a tight contact structure.  
\end{proof}

\begin{remark}
The manipulation of the surgery diagrams given in the first two rows of Figure~\ref{whitehead} demonstrates that $M(-3;5/2)$ is the Weeks manifold since it can be obtained by $(5,5/2)$-surgery on the Whitehead link. Note that tight contact structures on the Weeks manifold were previously constructed by Stipsicz \cite{S}.
\end{remark}

\end{document}